\numberwithin{equation}{section} 
\newtheorem{thm}[equation]{Theorem} 
\newtheorem{question}[equation]{Question}
\newtheorem{prop}[equation]{Proposition}
\newtheorem{lemma}[equation]{Lemma}
\theoremstyle{definition}
\theoremstyle{remark}
\newtheorem{rmk}[equation]{Remark}
\newcommand{\F}{\mathbb F}
\newcommand{\Z}{\mathbb Z}
\renewcommand{\L}{\mathbb L}
\newcommand{\G}{\mathbb G}
\renewcommand{\P}{\mathbb P}
\renewcommand{\c}{\subseteq}
\newcommand{\A}{\mathbb A}
\newcommand{\mc}[1]{\mathcal{#1}}
\newcommand{\pre}[1]{\prescript{\alpha}{}{#1}}
\newcommand{\cl}{\overline}
\newcommand{\set}[1]{\{#1\}}
\renewcommand{\phi}{\varphi}
\newcommand{\on}[1]{\operatorname{#1}}
\DeclareMathOperator{\Spec}{Spec}
\DeclareMathOperator{\Gm}{\mathbb{G}_{\operatorname{m}}}
\newcommand{\ang}[1]{\left \langle{#1}\right \rangle}
\title{On the motivic class of an algebraic group}
\author{Federico Scavia}
\begin{document}
	\begin{abstract}
		Let $F$ be a field of characteristic zero admitting a biquadratic field extension. We give an example of a torus $G$ over $F$ whose classifying stack $BG$ is stably rational and such that $\set{BG}\neq \set{G}^{-1}$ in the Grothendieck ring of algebraic stacks over $F$. We also give an example of a finite \'etale group scheme $A$ over $F$ such that $BA$ is stably rational and $\set{BA}\neq 1$.
	\end{abstract}	
	\maketitle
	
	\section{Introduction}	
	Let $F$ be a field. The Grothendieck ring of algebraic stacks $K_0(\on{Stacks}_F)$ was introduced by Ekedahl in \cite{ekedahl2009grothendieck}, following up on earlier works \cite{behrend2007motivic}, \cite{joyce2007motivic}, \cite{toen2005grothendieck}. It is a variant of the Grothendieck ring of varieties $K_0(\on{Var}_F)$. By definition, $K_0(\on{Stacks}_F)$ is generated as an abelian group by the equivalence classes $\set{X}$ of all algebraic stacks $X$ of finite type over $F$ with affine stabilizers. These classes are subject to the scissor relations $\set{X}=\set{Y}+\set{{X}\setminus{Y}}$ for every closed substack $Y\c X$, and the relations $\set{E}=\set{\A^n\times X}$ for every vector bundle $E$ of rank $n$ over $X$. The product is defined by $\set{X}\cdot\set{Y}:=\set{X\times Y}$, and extended by linearity. 
	
	Given a group scheme $G$ over $F$, we may consider the class $\set{BG}$ of its classifying stack in $K_0(\on{Stacks}_F)$. The problem of computing $\set{BG}$ appears to be related to the problem of the stable rationality of $BG$, although no direct implications are known. Recall that $BG$ is stably rational if for one (equivalently, every) generically free representation $V$ of $G$, the rational quotient $V/G$ is stably rational. An equivalent terminology is that the Noether problem for stable rationality has a positive solution for $G$; see \cite[\S 3]{florence2018genus0}. The case of a finite (constant) group $G$ was considered in \cite{ekedahl2009geometric}: it frequently happens that $\set{BG}=1$ (notably for the symmetric groups, see \cite[Theorem 4.3]{ekedahl2009geometric}), although there are examples of finite groups $G$ for which $\set{BG}\neq 1$; see \cite[Corollary 5.2, Corollary 5.8]{ekedahl2009geometric}. Further work on the triviality of $\set{BG}$ for finite groups $G$ has been done in \cite{martino2016ekedahl} and \cite{martino2017introduction}. So far, all the known examples of finite group schemes $G$ for which $\set{BG}\neq 1$ are such that $BG$ is not stably rational. This suggests the following question.
	\begin{question}(cf. \cite[\S 6]{ekedahl2009geometric})\label{noetherfinite}
		Is it true that, for a finite group scheme $G$, the following two conditions are equivalent?		
		\begin{itemize}
			\item $BG$ is stably rational;
			\item $\set{BG}=1$ in $K_0(\on{Stacks}_F)$.
		\end{itemize}	
	\end{question}
	We will answer \Cref{noetherfinite} in the negative in \Cref{1.6}. 
	
	Now let $G$ be a connected linear algebraic group. Recall that $G$ is special if every $G$-torsor is Zariski-locally trivial. For example, $\on{GL}_n,\on{SL}_n$ and $\on{Sp}_n$ are special; see \cite{chevalley1958espaces}. It was shown by Ekedahl that if $P\to S$ is a torsor under the special group $G$, then $\set{P}=\set{G}\set{S}$. This is immediate if $S$ is a scheme, but less obvious when $S$ is a stack; see \cite[Corollary 2.4]{bergh2015motivic}. Applying this to the universal $G$-torsor $\Spec F\to BG$, one obtains $\set{BG}\set{G}=1$. 
	
	The equality $\set{BG}=\set{G}^{-1}$ appears to be the analogue for connected groups of the relation $\set{BG}=1$ for finite group schemes. In \cite{bergh2015motivic}, these equalities are referred to as \emph{expected class formulas}, and there is a sense in which they are  ``almost" true. In \cite[\S 2]{ekedahl2009grothendieck} Ekedahl defines a generalized Euler characteristic \[\chi_{\on{c}}:K_0(\on{Stacks}_F)\to K_0(\on{Coh}_F)\] taking values in a Grothendieck ring $K_0(\on{Coh}_F)$ of Galois representations over $F$. If $G$ is a finite group scheme, the equality $\chi_{\on{c}}(\set{BG})=1$ always holds \cite[Proposition 3.1]{ekedahl2009geometric}. On the other hand, if $G$ is connected, then $\chi_{\on{c}}(\set{BG}\set{G})=1$; see \cite[\S 2.2]{bergh2015motivic}. Since $\set{BG}\neq 1$ for some finite groups $G$, the following question naturally arises.
	
	\begin{question}\label{expected}
		Let $F$ be a field. Is it true that  \begin{equation}\label{exp}\set{BG}=\set{G}^{-1}\end{equation} in $K_0(\on{Stacks}_F)$ for every connected group $G$?
	\end{question}
	In \Cref{1.5}, we show that the answer to \Cref{expected} is also negative. Computations for non-special $G$ have been carried out for $\on{PGL}_2$ and $\on{PGL}_3$ in \cite{bergh2015motivic}, for $\on{SO}_n$ and $n$ odd in \cite{dhillon2016motive}, for $\on{SO}_n$ and $n$ even and $\on{O}_n$ for any $n$ in \cite{talpo2017motivic}, and for $\on{Spin}_7, \on{Spin}_8$ and $\on{G}_2$ in \cite{pirisi2017motivic}. In each of these cases, (\ref{exp}) was found to be true. The expectation was that, for a connected linear algebraic group $G$ over a field $F$ of characteristic $0$, \Cref{noether} below should have an affirmative answer. If $F$ is an algebraically closed field, then there are no examples of connected $G$ where $BG$ is known not to be stably rational. If $F$ is not assumed to be algebraically closed, then such examples exist. The following variant of \Cref{noetherfinite} seems natural in this context.
	\begin{question}(cf. \cite[\S 1]{talpo2017motivic} and \cite[Remark 4.1]{pirisi2017motivic})\label{noether}
		Is it true that, for a connected linear algebraic group $G$, the following two conditions are equivalent?		
		\begin{itemize}
			\item $BG$ is stably rational;
			\item $\set{BG}=\set{G}^{-1}$ in $K_0(\on{Stacks}_F)$.
		\end{itemize}	
	\end{question}
	
	Our first result gives a negative answer to \Cref{expected} and \Cref{noether}.
	
	\begin{thm}\label{1.5}
		Let $F$ be a field of characteristic zero which admits a biquadratic field extension $K$, let $E_1$ and $E_2$ be two distinct quadratic subextensions of $K/F$, and set $G:=R_{E_1\times E_2/F}^{(1)}(\Gm)$. Then
		\begin{enumerate}[label=(\alph*)]
			\item $BG$ is stably rational, and
			\item $\set{BG}\neq\set{G}^{-1}$ in $K_0(\on{Stacks}_F)$.
		\end{enumerate}
	\end{thm}
	
	The torus $G$ is an example of a norm-one torus; see \Cref{prelim} for the definition. It follows from \Cref{1.5} that counterexamples $H$ to (\ref{exp}) exist in any dimension $\dim H\geq 3$: consider for example $H:=G\times \G_{\on{m}}^r$ for $r\geq 0$.
	
	The key ingredient in the proof of \Cref{1.5} is the \emph{refined Euler characteristic} of Ekedahl, introduced in \cite[\S 6, 3]{ekedahl2009grothendieck}; see \Cref{sec4}.
	
	Our second result gives a negative answer to \Cref{noetherfinite}.
	\begin{thm}\label{1.6}
		Let $F$ be a field of characteristic zero which admits a biquadratic field extension $K$, and let $E_1$ and $E_2$ be two distinct quadratic subextensions of $K/F$. Define $G:=R_{E_1\times E_2/F}^{(1)}(\Gm)$, and let $A:=G[2]$ be the $2$-torsion subgroup of $G$. Then
		\begin{enumerate}[label=(\alph*)]
			\item $BA$ is stably rational, and
			\item $\set{BA}\neq 1$ in $K_0(\on{Stacks}_F)$.
		\end{enumerate}
	\end{thm}
	
	Questions \ref{noetherfinite}, \ref{expected} and \ref{noether} remain open in the case, where the base field $F$ is assumed to be algebraically closed. Our arguments do not shed any new light in this setting.

	The remainder of this paper is structured as follows. In \Cref{prelim} we review well known computations of motivic classes for non-split tori. In \Cref{sec3} we obtain explicit formulas for the motivic classes of $G$ and $BG$, and in \Cref{sec4} we give the required background on the refined Euler characteristic.
	In \Cref{n2} we prove \Cref{1.5}, and in \Cref{sec5} we prove \Cref{1.6}.

	\section{Preliminaries}\label{prelim}
	Let $F$ be a field. We will write $\L$ for the class $\set{\A^1}$ in $K_0(\on{Var}_F)$ or $K_0(\on{Stacks}_F)$. If $E$ is an \'etale algebra over $F$, we will denote by $\set{E}$ the class $\set{\Spec E}$ in $K_0(\on{Var}_F)$ or $K_0(\on{Stacks}_F)$. If $X$ is a quasi-projective scheme over $E$, we will denote by $R_{E/F}(X)$ the \emph{Weil restriction} of $X$ to $F$. By definition, for every $F$-scheme $S$ one has $R_{E/F}(X)(S)=X(S_E)$. We refer the reader to \cite[\S 3.12]{voskresenskii2011algebraic} for an account of the main properties of the Weil restriction.
	
	Let $G$ be a linear algebraic group over $F$, and $\alpha\in H^1(F,G)$ be represented by a $G$-torsor $P\to \Spec F$. For every quasi-projective $F$-scheme $Z$, we denote by $\pre{Z}$ the \emph{twist} of $Z$ by $P$, that is, \[\pre{Z}:=(Y\times P)/G,\] where $G$ acts diagonally. We refer the reader to \cite[Section 2]{florence2008inventiones} for the definition and the basic properties of the twisting operation. 
	
	We will write $C_2$ for the cyclic group of two elements, and $S_n$ for the symmetric group on $n$ symbols.
	
	The following observations will be repeatedly used in the sequel.
	
	\begin{lemma}\label{norat}
		Let $X$ be a scheme over $F$, $E$ an \'etale algebra of degree $n$ over $F$, $\alpha\in H^1(F,S_n)$ the class corresponding to $E/F$.
		\begin{enumerate}[label=(\alph*)] 
			\item Let $S_n$ act on the disjoint union $\amalg_{i=1}^n X$ by permuting the $n$ copies of $X$. Then \[\pre{(\amalg_{i=1}^n X)}\cong X_E.\]
			\item Let $S_n$ act on $X^n$ by permuting the $n$ factors. Then \[\pre{(X^n)}\cong R_{E/F}(X).\]
		\end{enumerate}
	\end{lemma}
	
	\begin{proof}
		(a) Let $Y:=\amalg_{i=1}^n X$, and let $S_n$ act on $Y$ by permuting the copies of $X$. By definition, \[\pre{Y}=(Y\times \Spec E)/S_n\cong (Y\times_X X_E)/S_n,\] where $S_n$ acts diagonally. This shows that $\pre{Y}$ is the twist of $X_E$ by the trivial $S_n$-torsor $Y\to X$ in the category of $X$-schemes, which implies $\pre{Y}\cong X_E$.
		
		(b) See the bottom of page 5 in \cite{florence2017positivegenus}. 
	\end{proof}
	
	\begin{lemma}\label{bergh}
		Let \[1\to N\to G\to H\to 1 \] be an exact sequence of group schemes over $F$, and assume that $G$ is special. Then \[\set{BN}=\set{H}/\set{G}.\]
	\end{lemma}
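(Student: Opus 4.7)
The plan is to realize $H$ as the total space of a $G$-torsor over $BN$ and then apply the multiplicativity result \cite[Corollary 2.4]{bergh2015motivic} cited in the introduction.

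First I would verify that the square
\[
\begin{array}{ccc}
H & \longrightarrow & BN \\
\downarrow & & \downarrow \\
\Spec F & \longrightarrow & BG
\end{array}
\]
is 2-Cartesian, where the right vertical arrow is induced by $N\hookrightarrow G$, the bottom arrow classifies the trivial $G$-torsor, and the top arrow classifies the $N$-torsor $G\to G/N=H$. One checks this on functors of points: an $S$-point of $\Spec F\times_{BG}BN$ is an $N$-torsor $T\to S$ together with a trivialization of the associated $G$-torsor $T\times^N G$, i.e. a reduction of the trivial $G$-torsor $S\times G$ to structure group $N$. Such reductions correspond to sections of the associated bundle $(S\times G)/N=S\times H\to S$, which are the same as morphisms $S\to H$.

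Next, since $\Spec F\to BG$ is the universal $G$-torsor, base change along $BN\to BG$ shows that $H\to BN$ is a $G$-torsor. Because $G$ is special, the pullback of this torsor to any smooth scheme atlas of $BN$ is a Zariski-locally trivial $G$-torsor. Hence \cite[Corollary 2.4]{bergh2015motivic} applies and gives
\[
\set{H}=\set{G}\set{BN}
\]
in $K_0(\on{Stacks}_F)$.

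Finally, as recalled in the introduction, applying the same multiplicativity result to the universal torsor $\Spec F\to BG$ yields $\set{BG}\set{G}=1$, so $\set{G}$ is a unit in $K_0(\on{Stacks}_F)$. Dividing the previous equality by $\set{G}$ gives $\set{BN}=\set{H}/\set{G}$. The only substantive step is the identification of the fiber of $BN\to BG$ with $H$; once that is in place, the result is a one-line application of the cited multiplicativity theorem.
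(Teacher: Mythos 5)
Your argument is correct, but it is worth noting that the paper does not actually prove this lemma: its ``proof'' is a bare citation of \cite[Proposition 2.9]{bergh2015motivic}. What you have written is, in effect, the standard proof of that cited proposition. The key step --- identifying the $2$-fiber product $\Spec F\times_{BG}BN$ with $H$ via reductions of structure group of the trivial $G$-torsor, so that $H\to BN$ becomes a $G$-torsor pulled back from the universal one --- is exactly right, and the descent to $\set{H}=\set{G}\set{BN}$ via \cite[Corollary 2.4]{bergh2015motivic} together with the invertibility of $\set{G}$ (from $\set{BG}\set{G}=1$) finishes it cleanly. So your route is more self-contained than the paper's (you only rely on the multiplicativity statement for special-group torsors over stacks, which the introduction already invokes), at the cost of a paragraph of functor-of-points bookkeeping that the paper outsources to the reference. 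One small point to keep honest: the multiplicativity result you cite is itself the nontrivial input here (torsors over \emph{stacks} under special groups), so your proof is not more elementary than the citation, just more explicit about how the cited torsor result yields the quotient formula.
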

	
	\begin{proof}
		See \cite[Proposition 2.9]{bergh2015motivic}.
	\end{proof}
	
	Let $F_s$ be a separable closure of $F$. Recall that a group scheme $T$ over $F$ is called a \emph{torus} if $T_{F_s}\cong \G_{\on{m},F_s}^n$ for some $n\geq 0$. The \emph{character lattice} of $T$ is the finitely generated $\Z$-free $\on{Gal}(F)$-module $\on{Hom}_{F_s}(T_{F_s},\G_{\on{m},F_s})$. The character lattice induces an anti-equivalence between the category of $F$-tori and the category of $\on{Gal}(F)$-lattices, i.e., $\Z$-free continuous $\on{Gal}(F)$-modules; see \cite[\S 2]{favi2008tori}. Similarly, for every separable finite extension $L/F$, we have an anti-equivalence between $\on{Gal}(L/F)$-lattices and $F$-tori $T$ split by $L$, i.e., such that $T_L\cong \G_{\on{m},L}^n$ for some $n\geq 0$. The \emph{dual torus} of $T$ is the torus $T'$ whose character lattice is dual to that of $T$.
	
	Let $E$ be an \'etale algebra over $F$. If $G$ is a group scheme over $E$, then $R_{E/F}(G)$ is a group scheme over $F$. The group $R_{E/F}(\Gm):=R_{E/F}(\G_{\on{m},E})$ is an $F$-torus. Tori of this kind are called \emph{quasi-split}. They are special groups, and they correspond to permutation $\on{Gal}(F)$-lattices, that is, lattices admitting a $\Z$-basis that is permuted by $\on{Gal}(F)$; see \cite[\S 3.12, Example 19]{voskresenskii2011algebraic}. 
	
	\begin{lemma}\label{stablyrat}
		Let $T$ be an algebraic torus over $F$, and let $T'$ be its dual. Assume that $T$ is stably rational. Then 
		\begin{enumerate}[label=(\alph*)]
			\item $BT'$ is stably rational;
			\item $\set{BT'}\set{T}=1$ in $K_0(\on{Stacks}_F)$.
		\end{enumerate}
	\end{lemma}
	
	\begin{proof}
		Since $T$ is stably rational, by \cite[\S 4.7, Theorem 2]{voskresenskii2011algebraic} there is a short exact sequence \begin{equation}\label{qsplit}1\to T_1\to T_2\to T\to 1\end{equation} where $T_1$ and $T_2$ are quasi-split. Since quasi-split tori are isomorphic to their dual, the sequence dual to (\ref{qsplit}), \begin{equation}\label{qsplit2}1\to T'\to T_2\to T_1\to 1,\end{equation} shows that $T'$ embeds in $T_2$. We may view $T_2$ as a maximal torus inside $\on{GL}_n$, where $n=\on{rank}T_2$. This gives a faithful representation of $T'$ with quotient birational to $T_1$. Since quasi-split tori are rational, it follows that $BT'$ is stably rational. 
		
		Quasi-split tori are special, so we may apply \Cref{bergh} to (\ref{qsplit}) and (\ref{qsplit2}). We obtain $\set{T}=\set{T_2}/\set{T_1}$ and $\set{BT'}=\set{T_1}/\set{T_2}$, so $\set{BT'}\set{T}=1$.
	\end{proof}
	
	Let $E/F$ be an \'etale algebra, and let $R_{E/F}(\Gm)$ be the associated quasi-split torus. The kernel of the norm homomorphism $R_{E/F}(\Gm)\to \Gm$ is called a \emph{norm-one torus}, and is denoted by $R^{(1)}_{E/F}(\Gm)$. Its dual torus is isomorphic to $R_{E/F}(\Gm)/\Gm$.
	
	\begin{lemma}\label{torideg2}
		Assume that $\on{char}F\neq 2$. Let $E:=F(\sqrt{m})$ be a separable quadratic field extension, and let $\alpha$ denote the class of $E/F$ in $H^1(F,C_2)$. Then:
		\begin{enumerate}[label=(\alph*)] 
			\item $R^{(1)}_{E/F}(\Gm)\cong R_{E/F}(\Gm)/\Gm$.
			\item Let $\on{Gal}(E/F)$ act on $\P^1$ via $z\mapsto z^{-1}$. Then $\pre{\P^1}\cong \P^1$.	
			\item $R_{E/F}(\Gm)/\Gm$ is rational and \[\set{R_{E/F}(\Gm)/\Gm}=\set{B(R_{E/F}(\Gm)/\Gm)}^{-1}=\L-\set{E}+1.\]
			\item $\set{R_{E/F}(\Gm)}=\set{BR_{E/F}(\Gm)}^{-1}=(\L-1)(\L-\set{E}+1)$.
			\item $\set{R_{E/F}(\P^1)}=\L^2+\set{E}\L+1$.
		\end{enumerate}	
	\end{lemma}
	
	\begin{proof}
		(a) Both tori correspond to the unique non-trivial $\on{Gal}(E/F)$-lattice of rank $1$. Here $\on{Gal}(E/F)\cong C_2$.
		
		(b) The $C_2$-action on $\P^1$ has a fixed point $z = 1$, hence $\pre{\P^1}$ has an $F$-point. By Ch\^{a}telet's Theorem \cite[Theorem 5.1.3]{gille_szamuely_2006}, a form of $\P^n$ which admits an $F$-point is trivial (the case $n=1$ is particularly simple, see \cite[Remark 1.3.5]{gille_szamuely_2006}). We conclude that $\pre{\P^1}\cong \P^1$.
		
		(c) Let $T:=R^{(1)}_{E/F}(\Gm)\cong R_{E/F}(\Gm)/\Gm$. The open embedding $\Gm\hookrightarrow \P^1$, as the complement of $Z:=\set{0,\infty}$, is equivariant under the $C_2$-action on $\Gm$ and $\P^1$ given by $z\mapsto z^{-1}$. Twisting by $\alpha$, we obtain by (b) an open embedding of $T$ in $\P^1$ as the complement of $\pre{Z}$. In particular, $T$ is rational. By \Cref{norat}(a), $\pre{Z}\cong\Spec E$, so \[\set{T}=\set{\P^1}-\set{\pre{Z}}=\L+1-\set{E}.\]
		Now (c) follows from \Cref{stablyrat}(b).
		
		(d) The first equality holds because $R_{E/F}(\Gm)$ is special. Consider the short exact sequence \[1\to \Gm\to R_{E/F}(\Gm)\to T\to 1.\] Since $R_{E/F}(\Gm)$ is special, \Cref{bergh} yields \[\set{R_{E/F}(\Gm)}=(\L-1)\set{BT}^{-1},\] thus (d) follows from (c). 
		
		(e) Write $\P^1=\A^1\cup\set{\infty}$, and consider the $C_2$-equivariant decomposition \[(\P^1)^2=(\A^1)^2\amalg (\A^1\times\set{\infty}\cup \set{\infty}\times\A^1)\amalg \set{(\infty,\infty)}.\] By Hilbert's Theorem 90 and \Cref{norat}(a), twisting by $\alpha$ gives \[R_{E/F}(\P^1)=\A^2\amalg \A^1_E\amalg\Spec F,\] thus $\set{R_{E/F}(\P^1)}=\L^2+\set{E}\L+1$.	\end{proof}

	\section{The classes of $G$ and $BG$}\label{sec3}
	Let $F$ be a field of characteristic not $2$, and assume that there exists a biquadratic extension \[K:=F(\sqrt{m_1},\sqrt{m_2})\] of $F$. Let \[E_1:=F(\sqrt{m_1}),\qquad E_2:=F(\sqrt{m_2}),\qquad E_{12}:=F(\sqrt{m_1m_2}), \qquad E:=E_1\times E_2,\] and let $\Gamma:=\on{Gal}(K/F)\cong C_2^2$ be the Galois group of $K/F$. We define the torus \[G:=R^{(1)}_{E/F}(\Gm)\] and let \[G':=R_{E/F}(\Gm)/\Gm\] be the dual torus of $G$. By definition, we have a short exact sequence \begin{equation}\label{normone}
	   1\to G\to R_{E/F}(\Gm)\xrightarrow{N} \Gm\to 1,
	\end{equation} where $N$ is the norm homomorphism. 
	
	The purpose of this section is the proof of \Cref{dual'}, which expresses $\set{BG}$ and $\set{G}$ as rational functions in $\L$, with coefficients classes of \'etale algebras.
	
	Let $\sigma_1$ and $\sigma_2$ be generators for $\Gamma$ such that $E_1=K^{\sigma_1}$ and $E_2=K^{\sigma_2}$. Consider the $\Gamma$-action on $\G_{\on{m}}^2$, where $\sigma_1(u,v)=(v^{-1},u^{-1})$ and $\sigma_2(u,v)=(v,u)$, and set \begin{equation}\label{rank2}T:=\pre{(\G_{\on{m}}^2)},\end{equation} where $\alpha\in H^1(F,\Gamma)$ corresponds to the extension $K/F$.
	
	\begin{lemma}\label{t}
		We have \[\set{T}=\L^2+(\set{E_{12}}-\set{K})\L+\set{K}-\set{E_1}-\set{E_2}+1.\]
	\end{lemma}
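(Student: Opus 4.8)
The plan is to follow the template of \Cref{torideg2}\ref{torideg24}: compactify $\G_m^2$ equivariantly, pass to the twist, and evaluate the strata by means of \Cref{norat}. The open immersion $\G_m\hookrightarrow\P^1$ with complement $\set{0,\infty}$ is equivariant for the inversion $z\mapsto z^{-1}$, so the $\on{Gal}(K/F)$-action on $\G_m^2$ extends to $(\P^1)^2$, with $\sigma_1$ and $\sigma_1\sigma_2$ interchanging $0$ and $\infty$ in each factor. Twisting by $\alpha$ produces an open immersion $T\hookrightarrow\pre{((\P^1)^2)}$, so that $\set T=\set{\pre{((\P^1)^2)}}-\set{\pre{B}}$, where $B:=(\P^1)^2\setminus\G_m^2$.

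I would first dispose of the boundary $B$, stratified into the four corners $\set{0,\infty}^2$ and the four ``edges'' $\G_m\times\set{0,\infty}\cup\set{0,\infty}\times\G_m$. On the corners there are two $C_2^2$-orbits, $\set{(0,0),(\infty,\infty)}$ and $\set{(0,\infty),(\infty,0)}$, with stabilisers $\ang{\sigma_2}$ and $\ang{\sigma_1}$; by \Cref{norat}\ref{norat1} their twists are $\Spec E_2$ and $\Spec E_1$, contributing $\set E=\set{E_1}+\set{E_2}$. The four edges form a single free $C_2^2$-orbit (the stabiliser of any one edge is trivial), and after inverting the coordinate on two of them the action becomes the bare permutation action; thus the edge locus is $C_2^2$-isomorphic to $\on{Ind}_1^{C_2^2}\G_m$, whose twist is $\G_m\times_F\Spec K$, of class $(\L-1)\set K$, again by \Cref{norat}\ref{norat1}.

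The crux is the class of $\pre{((\P^1)^2)}$. Under the Segre embedding $(\P^1)^2\hookrightarrow\P^3$ the action becomes linear, permuting the coordinates $z_{00},z_{01},z_{10},z_{11}$: here $\sigma_1$ exchanges $z_{00}$ with $z_{11}$ and $\sigma_2$ exchanges $z_{01}$ with $z_{10}$. The underlying representation splits as the permutation representations on $\set{z_{00},z_{11}}$ and on $\set{z_{01},z_{10}}$, whose twists are the $F$-vector spaces $E_2$ and $E_1$; hence $\pre{\P^3}=\P(E_2\oplus E_1)\cong\P^3$ is split. Since $z_{00}z_{11}$ and $z_{01}z_{10}$ become the norm forms $N_{E_2/F}$ and $N_{E_1/F}$, the Segre quadric $z_{00}z_{11}=z_{01}z_{10}$ descends to the smooth quadric surface $\set{N_{E_2/F}(w)=N_{E_1/F}(\omega)}\subset\P^3$. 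Its discriminant is $m_1m_2$, so its two rulings are conjugate over $E_{12}$. The surface carries the evident $F$-point $w=\omega=1$, and projecting from it realises $\pre{((\P^1)^2)}$ as $\P^2$ blown up along the degree-two point $\Spec E_{12}$ cut out by the two rulings through the centre; comparing the two blow-ups gives $\set{\pre{((\P^1)^2)}}=\L^2+\set{E_{12}}\L+1$.

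Assembling the three contributions via the scissor relations then yields the asserted value of $\set T$. The main obstacle is the quadric computation: one must pin down the discriminant as $m_1m_2$ (so that the rulings are governed by $E_{12}$, and not by $E_1$ or $E_2$), and run the projection-from-a-point bookkeeping carefully, since a priori only the two blow-up classes, and not $\set{\pre{((\P^1)^2)}}$ itself, are directly accessible. The other delicate point is the edge stratum, where the coordinate inversions coming from $\sigma_1$ must be absorbed into a choice of coordinates before \Cref{norat}\ref{norat1} can be applied.
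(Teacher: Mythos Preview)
Your overall plan---compactify $\G_m^2\hookrightarrow(\P^1)^2$ equivariantly, twist, and evaluate the strata---is precisely the paper's. For the bulk term $\pre{((\P^1)^2)}$ you take a genuinely different route: the paper identifies $\pre{((\P^1)^2)}$ with $R_{E_{12}/F}(\P^1)$ by quotienting in two steps and then quotes \Cref{torideg2}\ref{torideg24}, whereas you linearise via the Segre embedding, recognise the descent as the pointed quadric $\{N_{E_2/F}=N_{E_1/F}\}\subset\P^3$ of discriminant $m_1m_2$, and project from the $F$-point. Both routes yield $\L^2+\set{E_{12}}\L+1$, and your argument has the virtue of making the appearance of $E_{12}$ completely transparent (it is the splitting field of the two rulings through the centre of projection). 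Your edge computation also agrees with the paper's.

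Where your proposal diverges is on the four corners, and this is where the final assembly breaks down. You find two $C_2^2$-orbits on $\set{0,\infty}^2$, with stabilisers $\ang{\sigma_2}$ and $\ang{\sigma_1}$, contributing $\set{E_2}+\set{E_1}$; the paper instead asserts a single transitive Klein orbit contributing $\set{K}$. With your value the scissor relation gives
\[
\set{T}=\L^2+(\set{E_{12}}-\set{K})\L+\bigl(1+\set{K}-\set{E_1}-\set{E_2}\bigr),
\]
which differs from the stated formula by the constant $\set{K}-\set{E_1}-\set{E_2}$, a quantity the paper itself later shows is nonzero. So the sentence ``assembling the three contributions yields the asserted value'' does not hold as written. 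In fact your orbit analysis is the correct one: with $\sigma_1(u,v)=(v^{-1},u^{-1})$ one has $\sigma_1(0,\infty)=(0,\infty)$, so the action on the corners is $\ang{(14),(23)}\subset S_4$, not the transitive Klein subgroup. The discrepancy is thus a slip in the paper's treatment of $Z_1$ (and in the constant term of the displayed formula), not in your argument; downstream it only shifts $\set{G}$ in \Cref{ginv} by $(\L-1)(\set{K}-\set{E_1}-\set{E_2})$, and the leading-coefficient relation (\ref{finalrel}) used in \S\ref{n2} is unaffected.
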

	
	\begin{proof}
		The embedding of $\Gm$ in $\P^1$ as the complement of $Z:=\set{0,\infty}$ gives an open embedding $\G_{\on{m}}^2\hookrightarrow (\P^1)^2$ such that the $\Gamma$-action on $\G_{\on{m}}^2$ extends to $(\P^1)^2$.
		By definition \[\pre{(\P^1)}^2=((\P^1)^2\times \Spec K)/\Gamma,\] where $\Gamma=\ang{\sigma_1,\sigma_2}$ acts diagonally. We first take the quotient by the subgroup $\ang{\sigma_1\sigma_2}$. Since $\sigma_1\sigma_2(u,v)=(u^{-1},v^{-1})$ and $E_{12}=K^{\sigma_1\sigma_2}$, by \Cref{torideg2}(b)
		\[\pre{(\P^1)^2}=((\P^1)^2\times \Spec E_{12})/C_2,\] where $C_2$ acts on $(\P^1)^2$ by switching the two factors. Here we are using the fact that every automorphism of $(\P^1)^2$ must respect the ruling (because it respects the intersection form), and so $\on{Aut}((\P^1)^2)= (\on{Aut}(\P^1))^2\rtimes C_2$, where $C_2$ switches the two factors. By \Cref{norat}(b) we deduce that $\pre{(\P^1)^2}\cong R_{E_{12}/F}(\P^1)$, so by \Cref{torideg2}(e) \begin{equation}\label{twistproj}\set{\pre{(\P^1)^2}}=\L^2+\set{E_{12}}\L+1.\end{equation}
		We may partition $(\P^1)^2\setminus \G_{\on{m}}^2$ in two strata \[Z_1:=Z\times Z,\qquad  Z_2:=(Z\times \Gm)\amalg (\Gm\times Z).\]
		The $\Gamma$-action on $Z_1$ has two orbits, and $\Gamma$ acts on $Z_2$ by transitively permuting the components as the Klein subgroup of $S_4$. By \Cref{norat}(a), $\pre{Z_1}=\Spec E_1\amalg \Spec E_2$ and $\pre{Z_2}=\Gm\times\Spec K$. By (\ref{twistproj})
		\begin{align*}\set{T}&=\set{\pre{(\P^1)}^2}-\set{\pre{Z_1}}-\set{\pre{Z_2}}\\ &=\L^2+\set{E_{12}}\L+1-\set{E_1}-\set{E_2}-\set{K}(\L-1) \\ &=\L^2+(\set{E_{12}}-\set{K})\L+\set{K}-\set{E_1}-\set{E_2}+1.\qedhere\end{align*}
	\end{proof}
	
	\begin{prop}\label{nicepres}
	There is a short exact sequence of tori
	\[1\to \Gm\to G\to T\to 1,\] where $T$ is the torus of (\ref{rank2}).
	\end{prop}
	
	\begin{proof}
		Let $P$, $M$ and $\Z$ be the character lattices of $R_{E/F}(\Gm)$, $G$ and $\Gm$, respectively. We may view $P$ as the $\Gamma$-lattice with a basis $e_1,e_2,e_3,e_4$, such that $\sigma_1$ acts by switching $e_1$ with $e_2$ and fixing $e_3$ and $e_4$, and $\sigma_2$ switches $e_3$ with $e_4$ and fixes $e_1$ and $e_2$. The sequence of $\Gamma$-lattices dual to (\ref{normone}) identifies $M$ with the cokernel of the $\Gamma$-homomorphism $\Z\to P$ given by $1\mapsto e_1+e_2+e_3+e_4$; denote by $\cl{e}_i\in M$ the projection of $e_i$. Following Kunyavski\u{\i} \cite[\S 3, Proposition 1(b)]{kunyavskii1987rank}, we consider an exact sequence of $\Gamma$-lattices \begin{equation}\label{lattice2}0\to N\to M\xrightarrow{\pi} \Z\to 0.\end{equation} The map $\pi$ is defined by $\pi(\sum a_i\cl{e}_i)=a_1+a_2-a_3-a_4$, and $N:=\on{Ker}\pi$. A basis for $N$ is given by $v_1:=\cl{e}_1+\cl{e}_3$ and $v_2:=\cl{e}_1+\cl{e}_4$. With respect to the basis $(v_1,v_2)$, the $\Gamma$-action on $N$ is given by $\sigma_1(a,b)=(-b,-a)$ and $\sigma_2(a,b)=(b,a)$. It is now clear that $N$ is the character lattice of the torus $T$ of (\ref{rank2}), hence the proof is complete.	
	\end{proof}
	
	\begin{prop}\label{dual'}
		\begin{enumerate}[label=(\alph*)]
			\item $BG$ is stably rational.
			\item $\set{BG}\set{G'}=1$ in $K_0(\on{Stacks}_F)$.
		\end{enumerate}
	\end{prop}
	
	\begin{proof}
		Consider the sequence \begin{equation}\label{ebbasta}1\to \Gm\to G'\to (R_{E_1/F}(\Gm)/\Gm)\times (R_{E_2/F}(\Gm)/\Gm)\to 1,\end{equation} which exhibits $G'$ as a $\Gm$-torsor over a rational variety, by \Cref{torideg2}(c). We deduce that $G'$ is rational, and now (a) and (b) follow from \Cref{stablyrat}. 	
	\end{proof}
	
	\begin{prop}\label{ginv}
		We have
		\begin{equation}\label{ginv2}\set{G}=(\L-1)(\L^2+(\set{E_{12}}-\set{K})\L+\set{K}-\set{E_1}-\set{E_2}+1)\end{equation} and
		\begin{equation}\label{ginv1}\set{BG}^{-1}=(\L-1)(\L-\set{E_1}+1)(\L-\set{E_2}+1)\end{equation}
		in $K_0(\on{Stacks}_F)$.
	\end{prop}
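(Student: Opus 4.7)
The two formulas come from different short exact sequences, so I would handle them separately and independently.

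For the identity (\ref{ginv2}), the plan is to exploit the short exact sequence (\ref{nicepres}):
\[ 1 \to \G_m \to G \to T \to 1. \]
This exhibits $G$ as a $\G_m$-torsor over the scheme $T$. Since $\G_m$ is special, every $\G_m$-torsor over a scheme is Zariski-locally trivial, so the corresponding relation $\set{G} = \set{\G_m}\set{T} = (\L-1)\set{T}$ holds in $K_0(\on{Var}_F)$ and hence in $K_0(\on{Stacks}_F)$. Substituting the value of $\set{T}$ computed in Lemma \ref{t} gives (\ref{ginv2}) directly.

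For (\ref{ginv1}), the strategy is to first rewrite $\set{BG}^{-1}$ using duality and then compute the class of the dual torus. Proposition \ref{dual'}\ref{dual3} already tells us $\set{BG}\set{G'}=1$, so $\set{BG}^{-1}=\set{G'}$. To compute $\set{G'}$, I would invoke the short exact sequence (\ref{ebbasta}) used in the proof of Proposition \ref{dual'}:
\[ 1 \to \G_m \to G' \to (R_{E_1/F}(\G_m)/\G_m) \times (R_{E_2/F}(\G_m)/\G_m) \to 1. \]
Again this exhibits $G'$ as a $\G_m$-torsor over a scheme, so by the same Zariski-local triviality argument,
\[ \set{G'} = (\L-1)\set{R_{E_1/F}(\G_m)/\G_m}\set{R_{E_2/F}(\G_m)/\G_m}. \]
Applying Lemma \ref{torideg2}\ref{torideg22} to each factor, with $\set{R_{E_i/F}(\G_m)/\G_m} = \L - \set{E_i} + 1$, yields (\ref{ginv1}).

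Both computations are mechanical once the right sequence is identified, so I do not anticipate a substantive obstacle. The only subtlety worth emphasizing is that neither $G$ nor $G'$ is in general special, so Lemma \ref{bergh} cannot be applied to them directly; the argument works precisely because in both sequences the \emph{kernel} is $\G_m$, which is special, making the torsors Zariski-locally trivial and allowing the product formula in $K_0(\on{Var}_F)$.
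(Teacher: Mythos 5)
Your proposal is correct and matches the paper's own proof essentially verbatim: (\ref{ginv2}) follows from (\ref{nicepres}), specialness of $\G_m$, and Lemma \ref{t}, while (\ref{ginv1}) follows from Proposition \ref{dual'}\ref{dual3}, the sequence (\ref{ebbasta}), and Lemma \ref{torideg2}\ref{torideg22}. Your closing observation about why specialness of the kernel $\G_m$ (rather than of $G$ or $G'$) is the relevant hypothesis is accurate and a nice touch, though the paper leaves it implicit.
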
 
	
	\begin{proof}
		By \Cref{nicepres}, $G$ is a $\Gm$-torsor over $T$. Since $\Gm$ is special, $\set{G}=(\L-1)\set{T}$. The class of $T$ was determined in \Cref{t}.
		
		By \Cref{dual'}(b), $\set{BG}^{-1}=\set{G'}$. Since $\Gm$ is special, by (\ref{ebbasta}), $\set{G'}=(\L-1)\set{R_{E_1/F}^{(1)}(\Gm)}\set{R_{E_2/F}^{(1)}(\Gm)}$. Now (\ref{ginv1}) follows from \Cref{torideg2}(c). 
	\end{proof}
	
	\section{The refined Euler characteristic}\label{sec4}
	Let $F$ be a field of characteristic zero. Using the computations of the previous section, we will reduce \Cref{1.5}(b) to the assertion that a certain polynomial in $\L$ with coefficients motivic classes of \'etale algebras  is a non-zero element of $K_0(\on{Var}_F)$. To prove the assertion, we will use a simplified version of the refined Euler characteristic, introduced by Ekedahl in \cite{ekedahl2009grothendieck}. 
    
    Fix a prime number $p$, and let $\mc{G}$ be a profinite group. The \emph{representation ring} $a_p(\mc{G})$ of $\mc{G}$ is the Grothendieck ring of continuous $\mc{G}$-representations $[M]$ of finite dimension over $\F_p$, subject to the relations $[M\oplus N]=[M]+[N]$. Note that no relations for non-split short exact sequences are imposed. The product structure on $a_p(\mc{G})$ is given by tensor product of representations. The next observation is well known when $\mc{G}$ is assumed to be finite; see \cite[\S 5.1]{benson1998representations}.
    
    \begin{lemma}\label{sameclass}
    As an abelian group, $a_p(\mc{G})$ is freely generated by the set of isomorphism classes of indecomposable representations.
	\end{lemma}
	
	\begin{proof}
	It is clear that $a_p(\mc{G})$ is generated by isomorphism classes of indecomposable representations. Assume that $\sum a_i[M_i]-\sum b_j[N_j]=0$ in $a_p(\mc{G})$, for some positive integers $a_i,b_j$ and some pairwise non-isomorphic indecomposable $\mc{G}$-representations $M_i$ and $N_j$.
	
	As a group, $a_p(\mc{G})$ is the quotient group $F/I$, where $F$ is the free abelian group with one generator $\ang{P}$ for every isomorphism class of $\mc{G}$-representations $P$, and $I$ is the subgroup generated by all elements of the form $\ang{P\oplus Q}-\ang{P}-\ang{Q}$. It follows that we may find a $\mc{G}$-representation $X$ such that
	\[(\oplus_i M_i^{\oplus a_i})\oplus X\cong (\oplus_j N_j^{\oplus b_j})\oplus X.\]
	Let $\mc{G}_0$ be a finite quotient of $\mc{G}$ such that $\mc{G}$ acts on $M_i$, $N_j$ and $X$ through $\mc{G}_0$. Then $M\oplus X\cong N\oplus X$ as $\mc{G}_0$-representations. By the Krull-Schmidt Theorem applied to the group algebra $\F_p[\mc{G}_0]$, this implies $M\cong N$ as $\mc{G}_0$-modules, hence as $\mc{G}$-modules. This is impossible, because the indecomposable representations $M_i$ and $N_j$ are pairwise non-isomorphic.
	\end{proof}
    
	\begin{prop}
	Let $F$ be a field of characteristic zero, let $\on{Gal}(F)$ be the absolute Galois group of $F$, and let $R_p:=a_p(\on{Gal}(F))$. There is a ring homomorphism \[\label{mu}\mu:K_0(\on{Var}_F)\to R_p[t]\] such that for every smooth complete variety $X$ we have $\mu(X)=\sum_i[H^{i}(\cl{X}_{\text{\'et}},\F_p)]t^i$.
	\end{prop}

	\begin{proof}
	See the proof of \cite[Proposition 3.2(i)]{ekedahl2009grothendieck}. To show that $\mu$ is well-defined, one needs to assume that $\on{char}F=0$ in order to invoke Bittner's presentation of $K_0(\on{Var}_F)$; see \cite[Theorem 3.1]{bittner2004universal}.
	\end{proof}

	\section{Proof of Theorem \ref{1.5}}\label{n2}
	
	\Cref{1.5}(a) was proved in \Cref{dual'}(b), so we will focus on \Cref{1.5}(b). We maintain the notation given at the beginning of \Cref{sec3}.

	\begin{proof}[Proof of \Cref{1.5}(b)]
		Assume by contradiction that $G=R^{(1)}_{E/F}(\Gm)$ satisfies (\ref{exp}). Then by \Cref{ginv} we have \begin{align*}(\L-1)(\L-\set{E_1}+1)(\L-\set{E_2}+1)= \\ =(\L-1)(\L^2+(\set{E_{12}}-\set{K})\L+\set{K}-\set{E_1}-\set{E_2}+1)\end{align*} in $K_0(\on{Stacks}_F)$. Since $\L-1$ is invertible in $K_0(\on{Stacks}_F)$, we may divide by $\L-1$ on both sides. Subtracting $\L^2$ on the left and on the right, we arrive to
		\[(2-\set{E_1}-\set{E_2})\L+(1-\set{E_1})(1-\set{E_2})=(\set{E_{12}}-\set{K})\L+\set{K}-\set{E_1}-\set{E_2}+1,\] that is \[(\set{K}-\set{E_1}-\set{E_2}-\set{E_{12}}+2)\L=0\] in $K_0(\on{Stacks}_F)$. 
		
		Recall that $K_0(\on{Stacks}_F)$ is the localization of $K_0(\on{Var}_F)$ at $\L$ and the cyclotomic polynomials in $\L$; see \cite[Theorem 1.2]{ekedahl2009grothendieck}. It follows that \begin{equation}\label{finaleq}
		(\set{K}-\set{E_1}-\set{E_2}-\set{E_{12}}+2)f(\L)=0    
		\end{equation} in $K_0(\on{Var}_F)$, where $f(x)\in \Z[x]$ is a monic polynomial of some degree $n$. 
		
		In order to obtain a contradiction, we now want to apply the homomorphism $\mu$ of (\ref{mu}), with respect to the prime $p=2$. If $L/F$ is an \'etale algebra of degree $n$, $\mu(\set{L})$ consists of the permutation representation of $\on{Gal}(F)$ associated to $L$, concentrated in degree $0$. Since we have chosen $p=2$, $\mu(\set{\P^1})$ consists of one copy of the trivial representation in degree $0$ and $2$ (in the case $p>2$ one would need a Tate twist in degree $2$). Since $\L=\set{\P^1}-1$, we deduce that $\mu(\L)=t^2$, and hence $\mu(f(\L))=f(t^2)$. 
		
	If $X$ is a finite $\on{Gal}(F)$-set, we denote by $\F_2[X]$ the permutation representation over $\F_2$ associated to $X$. Recall from \Cref{sec3} that we denote $\on{Gal}(K/F)$ by $\Gamma=\ang{\sigma_1,\sigma_2}$. Applying $\mu$ to (\ref{finaleq}) and looking at degree $2n$, we obtain
	\[[\F_2[\Gamma]]-[\F_2[\Gamma/\ang{\sigma_1}]]-[\F_2[\Gamma/\ang{\sigma_2}]]-[\F_2[\Gamma/\ang{\sigma_{12}}]]+2[\F_2]=0\] in $R_2$. This is a non-trivial relation of linear dependence in $R_2$ among classes of indecomposable representations. This is in contradiction with \Cref{sameclass}, hence $\set{BG}\neq \set{G}^{-1}$, as desired.
	\end{proof}
	
	\begin{rmk}
		By \cite[\S 4.9, Example 7]{voskresenskii2011algebraic} every torus of rank $2$ is rational, so by \Cref{nicepres} the torus $G$ is rational. By \Cref{stablyrat}, $BG'$ is stably rational and $\set{BG'}=\set{G}^{-1}$. By \Cref{dual'}(b) we have $\set{BG}=\set{G'}^{-1}$, so $\set{BG'}\set{G'}=\set{BG}^{-1}\set{G}^{-1}$. Since $\set{BG}\set{G}\neq 1$, the conclusions of \Cref{1.5}(a) and (b) hold for $G'$ as well. 
	\end{rmk}

	\section{Proof of Theorem \ref{1.6}}\label{sec5}
We maintain the notation of \Cref{sec3}.

	\begin{proof}[Proof of Theorem \ref{1.6}]
	Let $\Gamma:=\on{Gal}(K/F)$, let $M$ be the character lattice of $G$, so that $M/2M$ is the character module of $A$, and let $P$ be the character lattice of $R_{E/F}(\Gm)$. As in the proof of \Cref{nicepres}, we view $P$ as the lattice freely generated by $e_1,e_2,e_3,e_4$, such that $\sigma_1$ acts by switching $e_1$ with $e_2$, and $\sigma_2$  by switching $e_3$ with $e_4$. Using (\ref{normone}), we may construct a commutative diagram of $\Gamma$-modules
		\begin{equation}\label{diagram16}
		\begin{tikzcd}
		0 \arrow[r] & \Z \arrow[r] \arrow[d,"\iota"] & P \arrow[r] \arrow[equal]{d} & M \arrow[r] \arrow[d] & 0\\
		0 \arrow[r] & N \arrow[r] & P \arrow[r,"\phi"] & M/2M\arrow[r] & 0.
		\end{tikzcd}
		\end{equation}
	with exact rows. Here $\Z$ denotes the trivial one-dimensional $\Gamma$-lattice, $\iota(1):=e_1+e_2+e_3+e_4$, and $N$ is the kernel of $\phi$, that is,
	\[N=\set{\sum_{i=1}^4a_ie_i: a_1\equiv a_2\equiv a_3\equiv a_4\pmod 2}.\]
	Applying the snake lemma to (\ref{diagram16}), we obtain a short exact sequence
	\[0\to\Z \xrightarrow{\iota} N\to M\to 0.\]
	Define $\pi:N\to \Z$ by sending $\sum a_ie_i$ to $(a_1+a_2)/2$. Then $\pi$ is a $\Gamma$-homomorphism and $\iota$ is a section of $\pi$. Therefore, we have an isomorphism $N\cong \Z\oplus M$.
	
	Let $S$ be an $F$-torus with character lattice $N$. Since $N\cong \Z\oplus M$, we have $S\cong\G_{\on{m}}\times G$. The bottom row of (\ref{diagram16}) corresponds to the short exact sequence of group schemes \[1\to A\to R_{E/F}(\Gm)\to \G_{\on{m}}\times G\to 1.\]
	By \Cref{bergh}, we have $\set{BA}=\set{\Gm}\set{G}/\set{R_{E/F}(\Gm)}$. Applying \Cref{bergh} to (\ref{normone}), we see that $\set{BG}=\set{\Gm}/\set{R_{E/F}(\Gm)}$. Therefore, $\set{BA}=\set{BG}\set{G}$. By \Cref{1.6} we have $\set{BG}\neq \set{G}^{-1}$, hence $\set{BA}\neq 1$, as desired.
\end{proof}
	
	\section*{Acknowledgments}
	I would like to thank my advisor Zinovy Reichstein for his guidance and for greatly improving the exposition, Mattia Talpo and Angelo Vistoli for helpful comments, and Boris Kunyavski\u{\i} for sending me a copy of his paper \cite{kunyavskii1987rank}. I am very grateful to the anonymous referee for finding a mistake in a previous version of the proof of \Cref{1.5}, and for suggesting a fix.


\begin{thebibliography}{10}
		
		\bibitem{behrend2007motivic}
		Kai Behrend and Ajneet Dhillon.
		\newblock On the motivic class of the stack of bundles.
		\newblock {\em Advances in Mathematics}, 212(2):617--644, 2007.
		
		\bibitem{benson1998representations}
		D.~J. Benson.
		\newblock {\em Representations and cohomology. {II}}, volume~31 of {\em
			Cambridge Studies in Advanced Mathematics}.
		\newblock Cambridge University Press, Cambridge, second edition, 1998.
		\newblock Cohomology of groups and modules.
		
		\bibitem{bergh2015motivic}
		Daniel Bergh.
		\newblock Motivic classes of some classifying stacks.
		\newblock {\em Journal of the London Mathematical Society}, 93(1):219--243,
		2015.
		
		\bibitem{bittner2004universal}
		Franziska Bittner.
		\newblock The universal Euler characteristic for varieties of characteristic
		zero.
		\newblock {\em Compositio Mathematica}, 140(4):1011--1032, 2004.
		
		\bibitem{chevalley1958espaces}
		S{\'e}minaire~Claude Chevalley and JP~Serre.
		\newblock Espaces fibr{\'e}s alg{\'e}briques.
		\newblock {\em S{\'e}minaire Claude Chevalley}, 3:1--37, 1958.
		
		\bibitem{dhillon2016motive}
		Ajneet Dhillon, Matthew~B Young, et~al.
		\newblock The motive of the classifying stack of the orthogonal group.
		\newblock {\em The Michigan Mathematical Journal}, 65(1):189--197, 2016.
		
		\bibitem{ekedahl2009geometric}
		Torsten Ekedahl.
		\newblock A geometric invariant of a finite group.
		\newblock {\em arXiv preprint arXiv:0903.3148}, 2009.
		
		\bibitem{ekedahl2009grothendieck}
		Torsten Ekedahl.
		\newblock The {G}rothendieck group of algebraic stacks.
		\newblock {\em arXiv preprint arXiv:0903.3143}, 2009.
		
		\bibitem{favi2008tori}
		Giordano Favi and Mathieu Florence.
		\newblock Tori and essential dimension.
		\newblock {\em J. Algebra}, 319(9):3885--3900, 2008.
		
		\bibitem{florence2008inventiones}
		Mathieu Florence.
		\newblock On the essential dimension of cyclic {$p$}-groups.
		\newblock {\em Invent. Math.}, 171(1):175--189, 2008.
		
		\bibitem{florence2017positivegenus}
		Mathieu Florence and Zinovy Reichstein.
		\newblock On the rationality problem for forms of moduli spaces of stable
		marked curves of positive genus.
		\newblock {\em arXiv preprint arXiv:1709.05696}, 2017.
		
		\bibitem{florence2018genus0}
		Mathieu Florence and Zinovy Reichstein.
		\newblock The rationality problem for forms of ${M}_{0,n}$.
		\newblock {\em Bulletin of the London Mathematical Society}, 50(1):148--158,
		2018.
		
		\bibitem{gille_szamuely_2006}
		Philippe Gille and Tam{\'a}s Szamuely.
		\newblock {\em Central Simple Algebras and Galois Cohomology}.
		\newblock Cambridge Studies in Advanced Mathematics. Cambridge University
		Press, 2006.
		
		\bibitem{joyce2007motivic}
		Dominic Joyce.
		\newblock Motivic invariants of {A}rtin stacks and stack functions.
		\newblock {\em The Quarterly Journal of Mathematics}, 58(3):345--392, 2007.
		
		\bibitem{kunyavskii1987rank}
		B.~\`E. Kunyavski\u\i.
		\newblock Three-dimensional algebraic tori.
		\newblock In {\em Investigations in number theory ({R}ussian)}, pages 90--111.
		Saratov. Gos. Univ., Saratov, 1987.
		\newblock Translated in Selecta Math. Soviet. {{\bf{9}}} (1990), no. 1, 1--21.
		
		\bibitem{martino2016ekedahl}
		Ivan Martino.
		\newblock The {E}kedahl invariants for finite groups.
		\newblock {\em Journal of Pure and Applied Algebra}, 220(4):1294--1309, 2016.
		
		\bibitem{martino2017introduction}
		Ivan Martino.
		\newblock Introduction to the {E}kedahl {I}nvariants.
		\newblock {\em Mathematica scandinavica}, 120(2):211--224, 2017.
		
		\bibitem{pirisi2017motivic}
		Roberto Pirisi and Mattia Talpo.
		\newblock On the motivic class of the classifying stack of ${G}_2$ and the spin
		groups.
		\newblock {\em To appear in International Mathematics Research Notices. arXiv
			preprint arXiv:0903.3143}.
		
		\bibitem{talpo2017motivic}
		Mattia Talpo and Angelo Vistoli.
		\newblock The motivic class of the classifying stack of the special orthogonal
		group.
		\newblock {\em Bulletin of the London Mathematical Society}, 49(5):818--823,
		2017.
		
		\bibitem{toen2005grothendieck}
		Bertrand To{\"e}n.
		\newblock Grothendieck rings of {A}rtin n-stacks.
		\newblock {\em arXiv preprint math/0509098}, 2005.
		
		\bibitem{voskresenskii2011algebraic}
		V.~E. Voskresenski\u{\i}.
		\newblock {\em Algebraic groups and their birational invariants}, volume 179 of
		{\em Translations of Mathematical Monographs}.
		\newblock American Mathematical Society, Providence, RI, 1998.
		\newblock Translated from the Russian manuscript by Boris Kunyavski [Boris \`E.
		Kunyavski\u{\i}].
		
	\end{thebibliography}
\end{document}